\theoremstyle{definition}
\newtheorem{thm}{Theorem}[section]   % Numbered within each section
\newtheorem{lem}[thm]{Lemma}         % Numbered along with thm
\newtheorem{defi}[thm]{Definition}   % Numbered along with thm
\renewcommand{\phi}{\varphi}
\newcommand{\N}{{\mathbb N}}
\renewcommand\phi{\varphi}
\newcommand\by{\mbox{$\times$}}
\numberwithin{equation}{section}
\DeclareMathOperator{\tr}{tr}
\begin{document}

% \Large

\title{Joint spectral radius and forbidden products}
\author{Alexander Vladimirov}
\thanks{Institute for Information Transmission Problems, Moscow}

\address{Institute for Information Transmission Problems, Moscow}
\email{vladim@iitp.ru}

\subjclass[2020]{15A18 (primary); 15A45, 37H15, 65F15 (secondary)}

\maketitle

\begin{abstract}
We address the problem of finite products that attain the joint spectral radius of a finite number of square matrices. Up to date the problem of existence of ``forbidden products'' remained open. We prove that the product $AABABABB$ (together with its circular shifts and their mirror images) never delivers the strict maximum to the joint spectral radius if we restrict consideration to pairs $\{A,B\}$ of real $2\by 2$ matrices. Under this restriction circular shifts and their mirror images constitute the class of isospectral products and hence they all have the same spectral radius for any pair $\{A,B\}$ of $2\by 2$ matrices, even complex. For pairs of complex matrices we have numerical evidence  that $AABABABB$ is still a fobidden product. A couple of binary words that encode products from this isospectral class also happen to be the shortest forbidden patterns in the parametric family of double rotations.
\end{abstract}

\section{Joint spectral radius}

Let $S$ be a set of $d\by d$ matrices, real or complex. By $S^n$ we denote the collection of products $A_1\dots A_n$ where all $A_i\in S$ (with possible repetitions). 

Denote by $\rho(A)$ the spectral radius of $A$, that is, the maximal absolute value of eigenvalues of $A$.  We define the {\em joint spectral radius} (or JSR) of $S$ as 
$$
\rho(S) = \sup_{n\in\N,\Pi\in S^n}\rho(\Pi)^{\frac{1}{n}},
$$
This is not a standard definition of JSR, though it is equivalent to the standard one, see \cite{RotA6037,DauSe9222,LagTh9517,BloTh9731,BloTh2008,Jun09} for the history of the subject. 

We will say that $\rho(A_1\dots A_n)^{\frac{1}{n}}$ is the {\em normalized spectral radius} of the product $\Pi=A_1\dots A_n$. Hence, $\rho(S)$ is the supremum of normalized spectral radii of finite products of matrices from $S$. 

We say that the {\em finiteness property} holds for $S$ if, for some $n$, there exists a finite product $\Pi=A_1\dots A_n\in S^n$ such that $\rho(\Pi)^{\frac{1}{n}}=\rho(S)$. In this case the product $\Pi$ is called a {\em spectrum maximizing product} or SMP for the collection $S$ of square matrices. 

As an example, take
$$
A=\left(\begin{array}{cc} 0 & 1\\0 & 0\end{array}\right),\quad B=\left(\begin{array}{cc} 0 & 0\\1 & 0\end{array}\right).
$$
We get 
$$
AA=BB=\left(\begin{array}{cc} 0 & 0\\0 & 0\end{array}\right),\quad AB=\left(\begin{array}{cc} 1& 0\\0 & 0\end{array}\right),\quad BA=\left(\begin{array}{cc} 0 & 0\\0 & 1\end{array}\right).
$$

Clearly,
$$
\rho(A)=\rho(B)=0,\quad \rho(AB)=1,\quad \rho(\{A,B\})=1.
$$
The products $AB$, $BA$ and their powers are SMPs for the set $S=\{A,B\}$.

\section{Binary words}

In this paper we mainly consider pairs of $2\by 2$ matrices, that is, $S=\{A,B\}$, $d=2$. Let us define a homomorphism $P(W)$ of free semigroup $W$ on the alphabet $\{1,2\}$ to the semigroup of finite products of matrices $A$ and $B$ (the empty product is not included). We prefer to use symbols $1$ and $2$ instead of conventional $0$ and $1$ or $a$ and $b$ in order to be able to interpret the words as positive integers. 

We set $P(1)=A$, $P(2)=B$ and $P(w_1 w_2)=P(w_1)P(w_2)$ for any pair $w_1,w_2\in W$. That is, we substitute $A$ for $1$ and $B$ for $2$ in the binary word $w$.

Accordingly, we will use the term SMP for binary words as well as soon as a pair of matrices $\{A,B\}$ is given. Also we will call two words $w_1,w_2\in W$ {\em isospectral} if the products $P(w_1)$ and $P(w_2)$ are isospectral (have the same spectrum) for any pair of $2\by 2$-matrices. It is clear that isospectral words must have the same length. Moreover, they must be permutations of each other.

As is known, all cyclic shifts of $\Pi$ have the same spectrum and hence the same spectral radius as $\Pi$.
Hence if $\Pi$ is an SMP for $S$ then all its cyclic shifts are SMPs for $S$. This property holds for all dimensions $d$ but, for the particular case $d=2$, there exist other isospectral products. For instance, the mirror
image of $w$ is isospectral to $w$ \cite{southcott1979trace}, see also \cite{bochi2023spectrum} for comprehensive review of isospectrality results and for further references. The mirror image is obtained by writing the digits of $w$ in opposite order.

In the study of JSR, one of the main open problems is if the finiteness property has probability one among finite sets of matrices. As is a common case in ergodic optimization, the optimal paths, that is, SMPs, tend to be rather short periodic ones, see \cite{hunt1996optimal,contreras2016ground}. In the case of matrix products, it is widely believed that short periodic products deliver the maximum of growth rate, that is, they are SMPs in a majority of cases. 

Numerical testing confirms this guess. Experiments with random pairs of $2\by 2$-matrices whose elements are independently normally distributed produce frequency lists for finite binary words, and, in general, the frequency of a word being an SMP drops sharply with the length of the word. 

The shortest pair of isospectral words that cannot be produced from each other by operations of cyclic shifts and mirror image has length $12$. The general rule for isospectral binary words is currently not available \cite{bochi2023spectrum}. 

For square matrices of dimension $3$ and higher, it is not known if there exist isospectral words different from cyclic shifts of each other. If they exist, they should be longer than $30$, see \cite{bochi2023spectrum}.

We represent the set $W$ of primitive binary words as a disjoint union of isospectral clusters. Each cluster is a finite set of words whose normalized spectral radii coincide for each pair of $2\by 2$ matrices. Recall that a primitive word is a word that is not a power of a shorter word. Say, $12$ is primitive and $121212$ is not.

We will say that a primitive word $w$ is a {\em unique SMP} for a given pair of matrices $\{A,B\}$ if its normalized spectral radius is strictly larger than that of any primitive word $u$ that is not isospectral to $w$. Our main result here is the existence of a word that is not a unique SMP for any pair of real $2\by 2$ matrices.

\section{List of words}

In what follows we will only study binary words of length $L\le 8$ and respective products of $2\by 2$ matrices, real and complex. There are $62$ isospectral clusters of primitive words of this kind. For each cluster we choose a {\em representative word}. This is a word that is minimal in numerical order among all words of this cluster. For instance, the cluster $112, 121, 211$ is represented by the Lyndon word $112$.

We write the representative words of all isospectral clusters in the numerical order. The representative words of length $L\le 8$ constitute the list
$$
\begin{array}{lllll}
 1  &   2 &   12 &  112 &  122 \\
        1112    &    1122  &      1222  &     11112 &      11122\\
       11212   &    11222    &   12122   &    12222   &   111112\\
      111122    &  111212   &   111222    &  112122   &   112222\\
      121222    &  122222  &   1111112   &  1111122   &  1111212\\
     1111222   &  1112112   &  1112122   &  1112222  &   1121122\\
     1121212   &  1121222   &  1122122   &  1122222   &  1212122\\
     1212222   &  1221222   &  1222222  &  11111112  &  11111122\\
    11111212   & 11111222  &  11112112  &  11112122   & 11112222\\
    11121122   & 11121212   & w_{48}=11121222  &  11122122   & 11122222\\
    11211212   & 11211222 &   w_{53}=11212122  &  11212212  &  11212222\\
    11221222  &  11222222 &   12121222 &   12122122  &  12122222\\
    12212222   & 12222222&&&
\end{array}
$$
with $62$ entries. The words $w_{48}$ and $w_{53}$ are marked since they apparently never happen to deliver the unique maximum of normalized spectral radius among the words in the list. For $w_{53}$ we will prove this fact for pairs of real matrices. 

Let us give a brief description of the computer test. We generate $10^9$ random pairs $\{A,B\}$ of $2\by 2$ matrices. Each element of $A$ and $B$ is taken normally distributed and they are mutually independent. Then we calculate the normalized spectral radii of $62$ products and compare them.

It happens in my computer testings that, for each pair $\{A,B\}$, the maximal value of normalized spectral radius is attained at a single word among these $62$ representative words.  In the frequency list with $62$ entries, we add a unit to $m$-th entry each time the word $w_m$ provides the maximum of normalized spectral radius. 

If a word $w$ and a pair of matrices $\{A,B\}$ contribute to the frequency list, that does not imply, of course, that this word is SMP for the pair $\{A,B\}$ since we do not take maximum among all finite words, just among $62$ words in the list. The reverse implication is, however, true, that is, if a word of length $8$ or less is an SMP for $\{A,B\}$, then it delivers a maximum within the list of $62$ words.

\section{Frequency lists}

Let us look at two frequency lists for random real matrices and for random complex matrices. They are
$$
\begin{array}{lllll}
   380926385 &  380935818  & 130639753  &  22203148 &   22222524\\
    8830761 &  2566104 &    8830465 &   4925761 &   1125430\\
    2252074  &   1125690 &    2248037  &   4923084  &   2656504\\
    493994  &   94050  &  406132 & 11930 &   493757\\
    94025 &    2655848  &   1970137 &  312701 &  202762\\
    173712  &   1003228  &  3726 &   173514 &  423694\\
     1024319  &  3748 &   423266 &  312412  &  1021596\\
     202272 & 1003841  &   1974220 &  2804615 &  433927\\
     27866  &  252274 &  167688  &  682 &  173212\\
     10424  &   54485 &   0 &  14716 &  252269\\
     639544  &     14937  &         0  &    132280 &         716\\
      10419   &   434665  &     53608  &    637135   &    27811\\
      167459   &  2798846 &&&
      \end{array}
$$
and
$$
\begin{array}{lllll}
   409778851  & 409788750 &  115014092 &   20572425  &  20577206\\
    3996733 &     2812670 &    3999407 &    1367670  &    595576\\
      2101481 &     596830 &     2101519 &    1367205  &    472635\\
168696   &    24346  &    113651 &         427    &  168296\\
  24620 &     472898  &    244202   &    68458 &       52915\\
   30537 &     368549  &        85  &     30989   &   168740\\
      409995   &       93  &    168746  &     68580   &   409738\\
       52148  &    367624 &     243776   &   207092    &   53031\\        
       1458   &    21026 &       41219  &         2    &   13237\\
        239  &      7436  &         0 &        1893  &     20739\\
             218661  &      1860  &         0 &      81939  &           2\\
                      254  &     52751   &     7603   &   218570     &   1412\\
       40849  &    207568 &&&
\end{array}
$$

Surely, the words $1$ and $2$ are the most frequent ones, and then goes the word $12$, the words $112$ and $122$, and so on, though the frequency does not decrease monotonically as the length of the word increases. We guess that some kind of complexity parameter of finite words might be responsible for their frequency. 

The percentage of optimal words with lengths ranging from $1$ to $8$ (for real matrices) looks like this:
$$
69.8659\quad  23.9591\quad  4.0746\quad  1.2363\quad  0.5073\quad  0.1582\quad  0.1172\quad  0.0813
$$

\section{Forbidden words}

Notably, two words have zero frequencies in both lists. They are $w_{48}=11121222$ and $w_{53}=11212122$ that are marked in the list of representative words. According to extended numerical tests, for lengths over $8$, there seem to be many words of this kind.

I have run analogous tests for $3\by 3$ matrices for words of length up to $16$ and found that every word has positive frequency.

I also tried the {\em joint spectral subradius} instead of JSR and found no candidates to forbidden words even for $2\by 2$ matrices.

I also tested $2\by 2$ matrices with objective function $\|\Pi\|^{1/n}$ with different matrix norms instead of $\rho(\Pi)^{1/n}$ and, again, found no candidates to forbidden words.

Here we will prove that $w_{53}=112121222$ never delivers a single maximum (up to cyclic shifts and mirror images) to the normalized spectral radius among all the words of length $8$ or less. This implies, of course, that $w_{53}$ is not a unique SMP for any pair of $2\by 2$ matrices $\{A,B\}$. My numerical study suggests that this property holds for pairs of complex matrices as well, though we are only able to prove it for real ones. 

\begin{thm}\label{T1}
For any pair of real $2\by 2$ matrices $\{A,B\}$, the product 
$$
w_{53}(\{A,B\})=AABABABB
$$ 
never delivers a single maximum (up to cyclic shifts and mirror images) to the normalized spectral radius among all the products of length $8$ or less.
\end{thm}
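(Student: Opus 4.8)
The plan is to prove the statement directly: for every real pair $\{A,B\}$ I will exhibit a word of length $\le 8$ that is \emph{not} isospectral to $w_{53}$ yet whose normalized spectral radius is at least that of $w_{53}$. Two facts drive the reduction. First, $w_{53}$ is balanced (four $A$'s, four $B$'s), so $\det P(w_{53})=(\det A\,\det B)^{4}$; by the homogeneity of the normalized spectral radius I may rescale so that $\det A\,\det B=1$, after which every balanced product of even length $2m$ has determinant $1$ and spectral radius $\max\{1,(|\tau|+\sqrt{\tau^2-4})/2\}$, an increasing function of $|\tau|$, where $\tau$ is its trace. Second, by the Cayley--Hamilton/Fricke identities every such trace is a polynomial in $\tr A$, $\tr B$, $\tr(AB)$ and the two determinants, so the comparison becomes a semialgebraic question in finitely many real parameters. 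I will only ever compare $w_{53}$ against the two balanced competitors $12$ and $1122$, both of which lie in clusters distinct from that of $w_{53}$.

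The first regime is where $P(w_{53})$ has non-real eigenvalues. Since $A,B$ are real these occur as a complex-conjugate pair, so $\rho(P(w_{53}))=\sqrt{\det P(w_{53})}=|\det A\,\det B|^{2}$ and the normalized value collapses to $|\det A\,\det B|^{1/4}$. On the other hand every real $2\by 2$ matrix $N$ satisfies $\rho(N)\ge\sqrt{|\det N|}$, whence $\rho(AB)^{1/2}\ge|\det(AB)|^{1/4}=|\det A\,\det B|^{1/4}$. Thus the word $12$ already matches or beats $w_{53}$, so $w_{53}$ is not a strict maximizer. This step is exactly where reality is used: the conjugate-pair structure forces $\rho=\sqrt{\det}$, and its failure for genuinely complex matrices is what confines the theorem to the real case.

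The remaining, and main, regime is where $P(w_{53})$ has real eigenvalues and hence possibly large trace. Here I exploit the factorization $w_{53}=A(AB)^3B$, giving $\tr P(w_{53})=\tr\bigl((AB)^3(BA)\bigr)$. Writing $C=AB$ and $M=BA=A^{-1}CA$, which is conjugate to $C$, I diagonalize $C=\mathrm{diag}(\lambda,\lambda^{-1})$ and record the three traces as explicit functions of $\lambda$ and the single entry $m_{11}$ of $M$ in that basis: $\tr P(w_{53})=m_{11}(\lambda^3-\lambda^{-3})+\lambda^{-2}+\lambda^{-4}$, $\tr P(1122)=\tr(CM)=m_{11}(\lambda-\lambda^{-1})+1+\lambda^{-2}$, and $\tr\bigl((AB)^4\bigr)=\lambda^4+\lambda^{-4}$. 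The target inequality is that at every admissible $(\lambda,m_{11})$ either $\rho(P(w_{53}))\le\rho(AB)^4$ or $\rho(P(w_{53}))\le\rho(P(1122))^2$; qualitatively $12$ dominates for moderate shear $m_{11}$ while $1122$ takes over for large shear, because $\tr P(1122)$ and $\tr P(w_{53})$ both grow linearly in $m_{11}$ but $\rho(P(1122))^2$ grows quadratically. The analogous parametrization with $C$ elliptic (non-real eigenvalues of $AB$) and the degenerate cases $\det A\,\det B=0$ are handled separately and are routine.

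The hard part will be this real-eigenvalue regime, for two reasons. It is a genuine two-parameter semialgebraic comparison in $(\lambda,m_{11})$ in which the \emph{dominating competitor changes} across the parameter region, so one must either find a single trace identity interpolating between the $12$ and $1122$ bounds or else locate the crossover curve and verify each side, all while tracking the sign of the traces and the boundary $|\tau|=2$ where the spectral radius stops being analytic. Establishing this pointwise domination cleanly---rather than by brute force over all $61$ remaining representatives---is the crux; once it is in place, the conclusion that $w_{53}$ is never a unique maximizer is immediate.
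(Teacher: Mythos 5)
Your overall framework---rescaling to determinant one, passing to $C=AB$ and $D=BA$, reducing the comparison of spectral radii to a comparison of traces via Fricke/Cayley--Hamilton identities, and disposing of the case where $P(w_{53})$ has non-real eigenvalues by the bound $\rho(N)\ge\sqrt{|\det N|}$---is essentially the paper's route. The fatal problem is your choice of competitors. You compare $w_{53}$ only against $12$ and $1122$, i.e.\ against $C^4$ and $(CD)^2$, and assert that one of the two always dominates (``$12$ for moderate shear, $1122$ for large shear''). This is false, and the paper's proof is built around exactly the third competitor you omit: $w_{54}=11212212$, whose cyclic shift is $CCDD$. A concrete counterexample: let $C$ be the rotation by $\pi/6$ (so $\tr C=\sqrt3$, $\det C=1$), let $M=\mathrm{diag}(r,r^{-1})$ with $r^2+r^{-2}=10$, and set $A=M^{-1}$, $B=MC$, so that $AB=C$ and $BA=D=MCM^{-1}$. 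Then $\tr(AB)=\sqrt3$ and $\tr(ABBA)=\tr(CD)=\tfrac32-\tfrac14(r^2+r^{-2})=-1$, both of absolute value less than $2$, so the normalized spectral radii of $12$ and $1122$ both equal $1$; but $\tr(C^3D)=-\tfrac12(r^2+r^{-2})=-5$, so $\rho(AABABABB)=(5+\sqrt{21})/2>1$. Thus $w_{53}$ strictly beats both of your competitors here, while $\tr(C^2D^2)=-7$ gives $\rho(AABABBAB)=(7+\sqrt{45})/2$, which is what actually dominates. Note that this example has $AB$ elliptic, the case you set aside as ``routine''; similar counterexamples exist with $AB$ hyperbolic (e.g.\ $\tr(C)^2=4.1$, $\tr(CD)=-1$ gives $\tr(C^3D)=-7.2$ against $\tr(C^4)=2.41$ and $\tr((CD)^2)=-1$), so the failure also occurs squarely inside your main regime.

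In the paper's coordinates $u=\tr(C)^2$, $z=\tr(CD)$, $v=u-2$, the relevant traces are $F_1=v^2-2$, $F_2=z^2-2$, $F_4=vz+z-v-2$, and the region that $F_1,F_2$ alone cannot cover is roughly $u>1$, $z<1$, $z<v$: in the example above, $(u,z)=(3,-1)$ gives $|F_4|=5$ against $|F_1|=|F_2|=1$. (In the complementary region $v<-1$, $z>1$ your two competitors do suffice, which may be why the heuristic looked plausible.) The paper closes the gap with $F_3=vz+2z-2v-2$, the trace of $CCDD$, via a sign analysis of the differences $F_i-F_4$, and all three competitors are used in an essential way. So the missing ingredient is not a technical refinement of your $(\lambda,m_{11})$ analysis: no amount of work there will make $\max\{\rho(C^4),\rho((CD)^2)\}\ge\rho(C^3D)$ true, because it is not. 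You must enlarge the competitor set, e.g.\ by adding $11212212$, and redo the domination argument.
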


\section{Spectral radius as a function of trace}

For a while, we consider only matrices with determinant one

\begin{lem}\label{L1}
If $A$ and $B$ are real $2\by 2$-matrices and if $\det(A)=\det(B)=1$, then $\rho(B)>\rho(A)$ if and only if
$$
|\tr(B)|>\max\left\{2,|\tr(A)|\right\}.
$$
\end{lem}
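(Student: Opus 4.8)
The plan is to reduce everything to the relationship between spectral radius and trace for $2\times 2$ matrices of determinant one. For such a matrix $M$, the eigenvalues $\lambda_1,\lambda_2$ satisfy $\lambda_1\lambda_2=\det(M)=1$ and $\lambda_1+\lambda_2=\tr(M)$, so they are the roots of $\lambda^2-\tr(M)\lambda+1=0$. First I would analyze these roots as a function of the real parameter $t=\tr(M)$, splitting into the two regimes $|t|\le 2$ and $|t|>2$.

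In the regime $|t|\le 2$, the discriminant $t^2-4\le 0$, so the eigenvalues are complex conjugates with product $1$, hence both have absolute value $1$; therefore $\rho(M)=1$ whenever $|\tr(M)|\le 2$. In the regime $|t|>2$, the discriminant is positive and the two roots are real, reciprocal, and of the same sign; the one of larger absolute value is $\rho(M)=\tfrac{1}{2}\bigl(|t|+\sqrt{t^2-4}\bigr)$. The key structural observation is that on $|t|\ge 2$ this expression is a strictly increasing function of $|t|$ (its derivative with respect to $|t|$ is positive since $\sqrt{t^2-4}<|t|$), while on $|t|\le 2$ it is constant and equal to $1$. In other words, $\rho(M)$ depends on $\tr(M)$ only through $|\tr(M)|$, it equals $1$ on the whole interval $[-2,2]$, and it is strictly increasing in $|\tr(M)|$ once we leave that interval.

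To finish I would combine these two cases into a single monotonicity statement: $\rho$ is a nondecreasing function of $\max\{2,|\tr|\}$, and strictly increasing precisely where $|\tr|$ exceeds $2$. Concretely, $\rho(B)>\rho(A)$ forces $\rho(B)>1$, which by the above can only happen when $|\tr(B)|>2$, giving $\rho(B)=\tfrac12(|\tr(B)|+\sqrt{\tr(B)^2-4})$; and $\rho(B)>\rho(A)$ together with the monotonicity of this branch forces $|\tr(B)|>|\tr(A)|$ as well (treating $\rho(A)$ as either $1$ or the analogous expression). Conversely, if $|\tr(B)|>\max\{2,|\tr(A)|\}$, then $\rho(B)$ sits strictly above $1$ and strictly above the value at $|\tr(A)|$, yielding $\rho(B)>\rho(A)$.

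I do not expect any serious obstacle here, since the whole statement is really a monotonicity property of the scalar function $t\mapsto\rho(t)$; the only point requiring care is the bookkeeping of the two cases for $A$ (namely $|\tr(A)|\le 2$ versus $|\tr(A)|>2$) so that the single clean condition $|\tr(B)|>\max\{2,|\tr(A)|\}$ captures both, ensuring that the comparison $\rho(B)>\rho(A)$ is equivalent to $B$ lying strictly beyond both the threshold $2$ and the trace of $A$. The cleanest write-up is to define $f(t)=\max\{2,|t|\}$ and note that $\rho(M)$ is a strictly increasing function of $f(\tr(M))$, from which the biconditional is immediate.
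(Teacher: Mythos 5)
Your proposal is correct and follows essentially the same route as the paper: both analyze the characteristic equation $\lambda^2-t\lambda+1=0$, observe that $\rho=1$ when $|t|\le 2$ and that $\rho$ is a strictly increasing function of $|t|$ beyond that threshold, and deduce the biconditional from this monotonicity. Your handling of negative traces via $|t|$ is in fact slightly cleaner than the paper's closing remark, but the substance is identical.
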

\begin{proof}
Let $A$ be a real $2\by 2$-matrix and $\det(A)=1$. Let $\lambda$ and $1/\lambda$ be the eigenvalues of $A$. Then $\lambda+1/\lambda=t$, that is,
\begin{equation}\label{lsq}
\lambda^2-\lambda t+1=0.
\end{equation}
If $-2<t<2$ then the roots of (\ref{lsq}) are complex conjugate numbers of equal absolute value. This value must be equal to $1$ since $\det(A)=1$. 

For $t\ge 2$, $\rho(A)$ is equal to the larger root of (\ref{lsq}) and therefore it is strictly increasing from $1$ to $\infty$ as $t$ varies from $2$ to $\infty$. It remains to notice that $\tr(B)=-\tr(A)$ implies $B=-A$. Hence Lemma \ref{L1} holds.
\end{proof}

\section{Reduced problems}

Instead of comparing the product $w_{53}(A,B)$ with all other $61$ products in the list of representative words, I found numerically just $3$ words from this list that apparently always dominate the word $w_{53}$. They are $w_3=12$, $w_7=1122$, and $w_{54}=11212212$. By domination we mean that, for any pair of $2\by 2$ matrices, the maximum of normalized spectral radii of these three words is greater or equal to the normalized spectral radius of $w_{53}$.

Given a pair of $2\by 2$-matrices, let us restrict consideration to the following products of length $8$: $P_1=ABABABAB$, $P_2=AABBAABB$, $P_3=AABABBAB$, and $P_4=AABABABB$. Our goal is to prove that 
\begin{equation}\label{mne}
\rho(P_4)\le \max\{\rho(P_i): i=1,2,3\},
\end{equation}
for all pairs of real matrices $\{A,B\}$. This would imply Theorem \ref{T1}.

Note that there exist circular shifts of all four words that can be represented as products of two matrices $C=AB$ and $D=BA$. They are, for instance, $ABABABAB$, $ABBAABBA$, $ABABBABA$, and $ABABABBA$. We now study the spectral radii of four products $Q_1=CCCC$, $Q_2=CDCD$, $Q_3=CCDD$, and $Q_4=CCCD$.

\section{Fricke polynomials}

We will need {\em Fricke polynomials} for the proof of our domination result.

For introduction to Fricke polynomials, see, for instance, \cite{bochi2023spectrum} where they are called reduced Fricke polynomials. These are polynomials of three variables $x,y,z$ (traces of $A$, $B$, and $AB$, respectively, denoted $\tr(A),\tr(B),\tr(AB)$) with integer coefficients. For each finite binary word $w$ there exists a unique Fricke polynomial $F_w(x,y,z)$ such that $\tr(P_w(A,B))=F_w(\tr(A),\tr(B),\tr(AB))$ for each pair $\{A,B\}$ of complex $2\by 2$-matrices satisfying the condition $\det(A)=\det(B)=1$, see \cite{bochi2023spectrum} for detailed review and further references.

For matrices with other determinants, the trace of any finite product can be easily found by scaling. 

Let $C$ and $D$ be complex matrices with $\tr(C)=\tr(D)$ and $\det(C)=\det(D)=1$. Then we turn to Fricke polynomials in variables $x,y,z$ (they can be obtained from an online resource):
\begin{eqnarray*}
FR_1&=&2-4x^2+x^4,\\
FR_2&=&-2+z^2,\\
FR_3&=&2-x^2-y^2+xyz,\\
FR_4&=&-xy-z+x^2z.
\end{eqnarray*}

We have a special case $x=y$ since the traces of $AB$ and $BA$ are equal. Moreover, for all four products $P_i$, the Fricke polynomials can be represented as polynomials of two variables $u=x^2$ and $z$, where $x$ is the trace of $C$ and $z$ is the trace of $CD$.

Let us now write down the resulting four polynomials in variables $u,z$:
\begin{eqnarray}
\label{FF1} FS_1(u,z)&=&2-4u+u^2;\\ 
FS_2(u,z)&=&z^2-2;\\ \label{FF2}
FS_3(u,z)&=&2-2u+uz;\\ \label{FF3}
FS_4(u,z)&=&uz-u-z; \label{FF4}
\end{eqnarray}

Our next goal is to prove inequality \eqref{mne} under an additional constraint on matrices $C$ and $D$. Actually, we will prove the inequality
\begin{equation}\label{mne1}
\rho(Q_4)\le \max\{\rho(Q_i): i=1,2,3\}, 
\end{equation}
for products $Q_1=CCCC$, $Q_2=CDCD$, $Q_3=CCDD$, and $Q_4=CCCD$.

\section{Inequalities}

\begin{lem}\label{L2}
Let $C$ and $D$ be complex $2\by 2$ matrices with $\tr(C)=\tr(D)$ and $\det(C)=\det(D)=1$, and such that the values $u=\tr(C)^2$ and $z=\tr(CD)$ are real. Then inequality (\ref{mne1}) holds.
\end{lem}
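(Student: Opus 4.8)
The plan is to reduce everything to the four explicit polynomials $FS_1, FS_2, FS_3, FS_4$ in the two real variables $u, z$, and to prove the pointwise inequality that the spectral radius of $Q_4$ is dominated by the maximum of the spectral radii of $Q_1, Q_2, Q_3$. Let me think about how the spectral radius connects to these trace polynomials.

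Each $Q_i$ is a product of four matrices, each of determinant one, so $\det(Q_i) = 1$. By Lemma \ref{L1}, for determinant-one matrices the spectral radius is a strictly increasing function of $|\tr|$ once $|\tr| > 2$, and equals $1$ when $|\tr| \le 2$. So $\rho(Q_i) > 1$ iff $|\tr(Q_i)| > 2$, and comparing spectral radii of two determinant-one matrices amounts to comparing $\max\{2, |\tr|\}$. Hence inequality (\ref{mne1}) is equivalent to the scalar inequality
$$
\max\{2, |FS_4(u,z)|\} \le \max\{2,\ |FS_1(u,z)|,\ |FS_2(u,z)|,\ |FS_3(u,z)|\}
$$
for all real $(u,z)$ in the admissible region. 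This is the crux: I no longer need matrices, only a two-variable real inequality among four explicit polynomials. The admissibility region matters — since $u = \tr(C)^2$ we have $u \ge 0$, and $z = \tr(CD)$ ranges over reals subject to whatever constraint the Fricke relation imposes, so I would first pin down the exact range of $(u,z)$ that is realizable.

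I would carry this out as follows. First, dispose of the trivial case: whenever $|FS_4(u,z)| \le 2$ we have $\rho(Q_4) = 1 \le \rho(Q_1)$ automatically (or the right side is at least $1$), so it suffices to treat the region where $|FS_4| > 2$. Second, in that region I would try to show that at least one of $|FS_1|, |FS_2|, |FS_3|$ is $\ge |FS_4|$. The natural tool is to examine differences such as $FS_2 - FS_4 = z^2 - uz + u - z + $ (constant) and $FS_1 - FS_4$, $FS_3 - FS_4$, and to locate the sign regions of each difference in the $(u,z)$-plane. Because $FS_1$ depends only on $u$ and $FS_2$ only on $z$, while $FS_3$ and $FS_4$ mix the variables bilinearly, I expect the plane to split into a few semi-algebraic cells, in each of which one competitor dominates. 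A clean strategy is to bound $FS_4$ from above and below by combinations of the others; for instance one can look for identities or inequalities of the form $FS_4 \le \max\{FS_3, \text{something}\}$ by algebraic manipulation, exploiting that $FS_4 = uz - u - z$ and $FS_3 = uz - 2u + 2$ share the $uz$ term, so $FS_3 - FS_4 = -u - z + 2 + z = u(\cdot)$, giving tractable linear-in-$z$ comparisons.

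The main obstacle I anticipate is the case analysis on signs. Spectral radius depends on the \emph{absolute value} of the trace, so the inequality is about $|FS_4|$ versus the max of absolute values, and the absolute-value branches multiply the number of cells to check. In particular, $FS_4$ can be large and negative while the competitors are large and positive (or vice versa), and ruling out a configuration where $|FS_4|$ strictly exceeds every $|FS_i|$ requires verifying the inequality on each branch and along the boundaries where branches meet. I would organize this by reducing to a finite set of polynomial inequalities in $u, z$ (one per cell), each of which should be provable either by completing the square, by factoring, or by checking that a certain quadratic form is sign-definite on the relevant cell. The delicate part is confirming there is no open region where $Q_4$ strictly beats all three competitors — equivalently, that the only places where equality $\rho(Q_4) = \max_i \rho(Q_i)$ can hold are lower-dimensional, which is exactly what we need since the theorem asserts $w_{53}$ is never a \emph{strict} unique maximum.
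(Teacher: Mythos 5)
Your reduction of inequality (\ref{mne1}) to the scalar inequality $\max\{2,|FS_4|\}\le\max\{2,|FS_1|,|FS_2|,|FS_3|\}$ is correct and is exactly the reduction the paper uses: each $Q_i$ has determinant one and real trace $FS_i(u,z)$, so the Lemma \ref{L1} picture (spectral radius $=1$ for $|\tr|\le 2$, strictly increasing in $|\tr|$ thereafter) turns the matrix inequality into a two-variable polynomial inequality. But two things stand between your proposal and a proof.

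First, your description of the admissible region is wrong. From $u=\tr(C)^2$ you conclude $u\ge 0$, but $C$ is a complex matrix and the hypothesis is only that $\tr(C)^2$ is real; this allows $\tr(C)$ to be purely imaginary, hence $u<0$. This is not a corner case: in the application (Section 10) a real pair $\{A,B\}$ with $\det(AB)<0$ is rescaled so that $ab$ is purely imaginary, which forces $u\le 0$; and in the paper's own analysis the only region where a violation could occur is $u<1$, $z>1$, a substantial part of which (all of the set $S_1$) lies in $u\le 0$. Restricting to $u\ge 0$ would skip precisely the hardest part of the verification and would leave Theorem \ref{T1} unproved for pairs with $\det(AB)<0$.

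Second, the case analysis you correctly identify as the crux is only announced, not carried out, and your one attempted computation of a difference is off ($FS_3-FS_4=2-u+z$; it is not a multiple of $u$). The paper makes this step tractable by substituting $u=v+2$, after which the differences factor cleanly: $F_1-F_4=(-v-1)(z-v)$, $F_2-F_4=(z-1)(z-v)$, $F_3-F_4=z-v$. A violation forces all three differences to have the same sign, which (after discarding one sign by contradiction) confines the problem to $v<-1$, $z>1$ with $F_4<0$; there one needs $F_i+F_4\ge 0$ for the index $i$ at which $F_i$ is maximal, and this is checked on the three subregions $S_1,S_2,S_3$ using linearity of $F_i+F_4$ in one variable (vertices of a square for $S_3$, two rays for each unbounded region). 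Without something of this sort actually written down --- and valid for $u<0$ --- the proposal remains a plan rather than a proof.
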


In the system (\ref{FF1}--\ref{FF4}), let us make a change of variable $u=v+2$. We get
\begin{eqnarray*}
F_1(v,z)&=&v^2-2;\\
F_2(v,z)&=&z^2-2;\\
F_3(v,z)&=&vz+2z-2v-2;\\
F_4(v,z)&=&vz+z-v-2.
\end{eqnarray*}

Now, let us look at the differences $G_i(v,z)=F_i(v,z)-F_4(v,z)$ for $i=1,2,3$. We get
\begin{eqnarray}
G_1(v,z)&=&(-v-1)(z-v); \label{G11}\\
G_2(v,z)&=&(z-1)(z-v); \label{G12}\\
G_3(v,z)&=&z-v. \label{G13}
\end{eqnarray}

If (\ref{mne1}) breaks down, that is, if $|F_4(v,z)|> \max\{|F_i(v,z)|: i=1,2,3\}$, then either $G_i(v,z)>0$ for $i=1,2,3$, or $G_i(v,z)<0$ for $i=1,2,3$. In the second case we have $z<v$ from (\ref{G13}), $v<-1$ from (\ref{G11}), and $z>1$ from (\ref{G12}) which is a contradiction. 

It remains to consider the first case, that is, $v<-1$, $z>1$. In this case, if (\ref{mne1}) breaks down, then we must have $F_4<0$ and $F_i<-F_4$. On the domain $\{v,z:v<-1,z>1\}$, let us find areas where each polynomial $F_i(v,z)$, $i=1,2,3$, is maximal among $F_1(v,z),F_2(v,z),F_3(v,z)$. 

\begin{figure}[h]\label{fw3}
\centerline{\includegraphics[height=10cm]{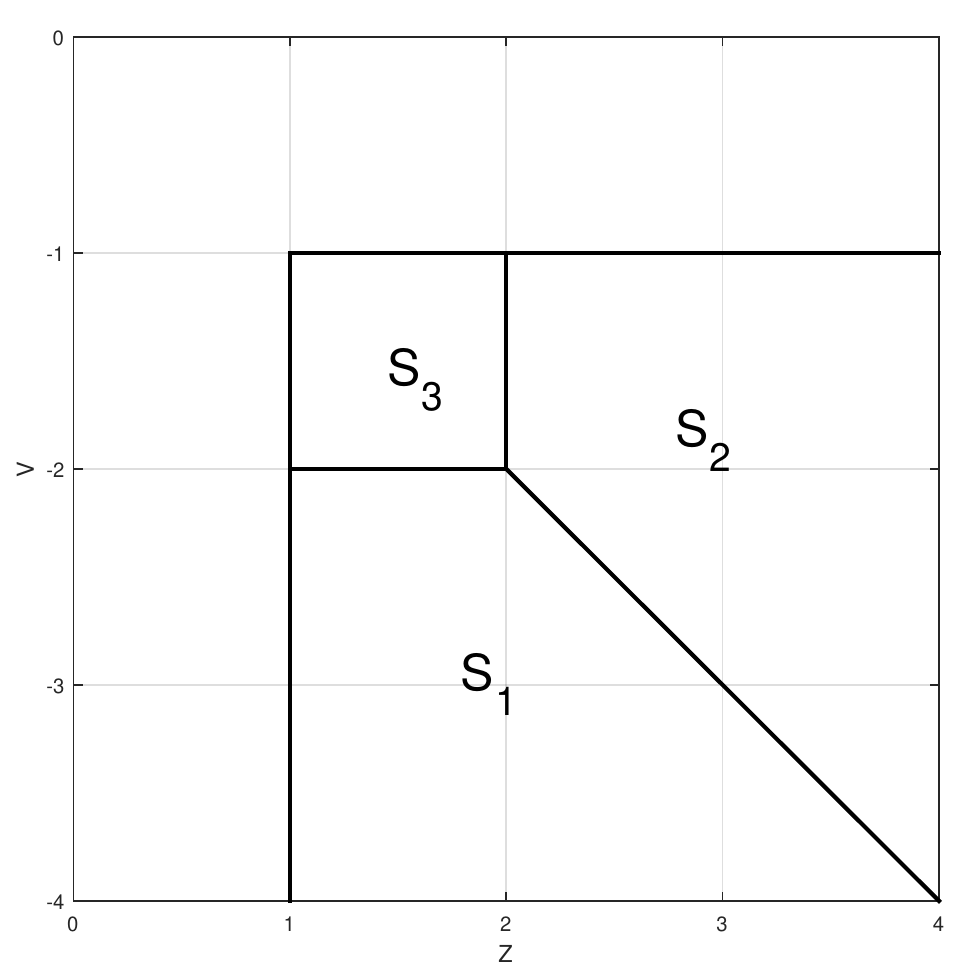}}
\caption{Areas of maximality}
\end{figure}

The inequality 
$$
F_3\ge F_1\quad \iff\quad (v+2)(v-z)\le 0
$$
is equivalent to $v\le -2$ since $v-z$ is negative on the domain $\{v,z:v<-1,z>1\}$. The inequality
$$
F_3\ge F_2\quad \iff\quad (z-2)(z-v)\le 0
$$
is equivalent to $z\le 2$. The intersection with the domain $\{v,z:v<-1,z>1\}$ gives us the square 
$$
S_3=\{v,z:-2\le v\le -1,1\le z\le 2\},
$$
where $F_3(v,z)$ is maximal.

Since the sum $F_3+F_4$ is linear both in variable $v$ and in variable $z$, it suffices to check that $F_3+F_4\ge 0$ at all the vertices of the square. This is, indeed, the case.

Then we look at two other areas
$$
S_1=\{(v,z):v\le -2,\, 1\le z\le -v\},
$$
and
$$
S_2=\{(v,z):z\ge 2,\, -z\le v\le -1\}
$$ 
where $F_1(v,z)$ and $F_2(v,z)$ are maximal, respectively.

They are convex polygonal sets, both unbounded. We take advantage of linearity of the sum $F_1+F_4$ in variable $z$ and linearity of the sum $F_2+F_4$ in variable $v$. We see that $F_1+F_4$ is not negative on the rays $(-2,1)+\alpha(-1,0)$ and $(-2,2)+\alpha(-1,-1)$, $\alpha\ge 0$. This suffices to prove that $F_1+F_4$ is not negative on the whole set $S_1$. Analogously, we prove that $F_2+F_4$ is not negative on the whole set $S_2$. Thus Lemma \ref{L2} is proved.

Note that the second condition of Lemma \ref{L2} holds if matrices $C$ and $D$ are real as well as if they are purely imaginary.

\section{General values of determinants}

Let us return to the products $P_1=ABABABAB$, $P_2=AABBAABB$, $P_3=AABABBAB$, and $P_4=AABABABB$. If we consider matrices $\tilde A=aA$ and $\tilde B=bB$ instead of $A$ and $B$, respectively, for arbitrary non-zero complex $a,b$, this substitution does not change the order of spectral radii of our four products since they all are multiplied by the same number $|a^4 b^4|$. 

Suppose now that $\det(AB)\ne 0$. Then there exist $a$ and $b$ such that $ab$ is either real or purely imaginary number and $\det(\tilde{A}\tilde{B})=1$. Clearly conditions of Lemma  \ref{L2} hold for matrices $C=\tilde{A}\tilde{B}$ and $D=\tilde{B}\tilde{A}$ and we conclude that inequality (\ref{mne}) holds.

In the case $\det(AB)=0$, we pass to the limit as $A_i\to A$ and $B_i\to B$ and $\det(A_i B_i)\ne 0$. This concludes the proof of Theorem \ref{T1}.

We have only proved that $w_{53}$ is forbidden in the case of real matrices, though numerical tests suggest that this is also true for complex matrices. At least, we have not found any counterexample among several billions of random pairs of complex $2\by 2$-matrices, each one of their $8$ elements drawn according to the normal distribution, independently.

\section{Forbidden patterns}

\begin{defi}
We say that a binary word $w$ is a {\em forbidden pattern} if each primitive word $uwv$ is forbidden for $2\by 2$ matrices, where $u$ and $v$ are finite binary words, may be, empty.
\end{defi}

We do not know if forbidden patterns exist. Numerical results suggest (but do not prove) that neither $w_{53}$ nor $w_{48}$, nor their circular shifts, nor circular shifts of their mirror images are forbidden patterns. The existence of forbidden patterns would imply that the fraction of forbidden words among all binary words of length $N$ tends to  $1$ as $N$ tends to $\infty$. We conjecture that this is indeed the case.

\section{Double rotations}

Here we consider a simple parametric dynamical system, {\em double rotation}, see \cite{suzuki2005double,clack2013double,zhuravlev2010one,kryzhevich2021dynamics,gorodetski2016synchronization}. If we encode the paths of this system by binary words, the list of words that are never produced by these paths begins with a circular shift of the mirror image of the word $w_{53}$, namely, with the word $12121122$. Note that this proposition is in some sense stronger than Theorem 1: An analogous result for JSR would mean that $12121122$ is not just a forbidden word, but a forbidden pattern, but this seems to be wrong.

Double rotations also had been referred by Victor Kozyakin \cite{kozyakin2022non} in connection with Barabanov norms of pairs of $2\by 2$ matrices.

Let us consider the following dynamical system with discrete time (double rotation). Real positive $R<1$ and real $h_1,h_2$ are given. The state space is the interval $[0,1)$. The map is 
$$
f(x)= \{x+h_1\}
$$
if $x<R$ and
$$
 f(x)=\{x+h_2\}
$$
otherwise. Here $\{x\}$ is the fractional part of $x$.

With each path $x_1,x_2,\dots,x_n$ we associate a binary word: we put $1$ if $x_i<R$ and $2$ otherwise. The question is which finite binary words cannot be produced by any set of parameters $R,h_1,h_2$. Numerical testing demonstrates that all the words of length $L\le 8$ can be produced in this way apart from the words $12121122$ and $21212211$ (yes, they are isospectral to $w_{53}$).

Let us prove that the word $w=12121122$ cannot be produced by any double rotation. Suppose there exists a path $x_1,\dots,x_8$ on the interval $[0,1)$ with associated binary word $w$, that is, 
$$
x_1,x_3,x_5,x_6\in[0,R),\quad x_2,x_4,x_7,x_8\in[R,1).
$$
Denote $\alpha=x_4-x_2$. Suppose first that $\alpha>0$. Clearly, 
\begin{equation}\label{E77}
x_3-x_1=x_5-x_3=x_8-x_6=\alpha.
\end{equation}
We also have $x_6=x_4+\alpha-1$, hence $0\le x_6<\alpha\le x_3$. Then, from (\ref{E77}), we get $x_8<x_5<R$ which is a contradiction since $x_8\ge R$. The case $\alpha<0$ is handled in a similar way.    

We have proved that  $12121122$ is a forbidden pattern. Hence its complementary word $21212211$ is also a forbidden pattern. All the other binary words of length $8$ or less were generated numerically for some combinations of parameters $R,h_1,h_2$.

It follows immediately that all the $8$-periodic sequences generated by the words $12121122$ and $21212211$ do not correspond to periodic solutions of any double rotation. By numerical testing we certify that all other binary periodic sequences of period $8$ or less are possible.

\newcommand{\etalchar}[1]{$^{#1}$}

%\bibliographystyle{alpha}
%\bibliography{C:/Structure/d2}

\begin{thebibliography}{KAB{\etalchar{+}}21}

\bibitem[BL23]{bochi2023spectrum}
Jairo Bochi and Piotr Laskawiec.
\newblock Spectrum maximizing products are not generically unique.
\newblock {\em arXiv preprint arXiv:2301.12574}, 2023.

\bibitem[Blo08]{BloTh2008}
Vincent~D. Blondel.
\newblock The birth of the joint spectral radius: {A}n interview with {G}ilbert
  {S}trang.
\newblock {\em Linear Algebra and its Applications}, 428:2261 -- 2264, 2008.

\bibitem[BT97]{BloTh9731}
V.~Blondel and J.~N. Tsitsiklis.
\newblock The {L}yapunov exponent and joint spectral radius of pairs of
  matrices are hard---when not impossible---to compute and to approximate.
\newblock {\em Math. Control Signals Systems}, 10:31--40, 1997.

\bibitem[Cla13]{clack2013double}
Gregory Clack.
\newblock {\em Double rotations}.
\newblock University of Surrey (United Kingdom), 2013.

\bibitem[Con16]{contreras2016ground}
Gonzalo Contreras.
\newblock Ground states are generically a periodic orbit.
\newblock {\em Inventiones mathematicae}, 205(2):383--412, 2016.

\bibitem[DL92]{DauSe9222}
I.~Daubechies and J.~C. Lagarias.
\newblock Sets of matrices all infinite products of which converge.
\newblock {\em Linear Algebra Appl.}, 161:227--263, 1992.

\bibitem[GK16]{gorodetski2016synchronization}
Anton Gorodetski and Victor Kleptsyn.
\newblock Synchronization properties of random piecewise isometries.
\newblock {\em Communications in Mathematical Physics}, 345:781--796, 2016.

\bibitem[HO96]{hunt1996optimal}
Brian~R Hunt and Edward Ott.
\newblock Optimal periodic orbits of chaotic systems occur at low period.
\newblock {\em Physical Review E}, 54(1):328, 1996.

\bibitem[Jun09]{Jun09}
R.~Jungers.
\newblock {\em The joint spectral radius. {T}heory and applications}, volume
  385 of {\em Lecture Notes in Control and Information Sciences}.
\newblock Springer-Verlag, London, 2009.

\bibitem[KAB{\etalchar{+}}21]{kryzhevich2021dynamics}
Sergey Kryzhevich, Viktor Avrutin, Nikita Begun, Dmitrii Rachinskii, and Khosro
  Tajbakhsh.
\newblock Dynamics of systems with a discontinuous hysteresis operator and
  interval translation maps.
\newblock {\em Axioms}, 10(2):80, 2021.

\bibitem[Koz22]{kozyakin2022non}
Victor Kozyakin.
\newblock Non-sturmian sequences of matrices providing the maximum growth rate
  of matrix products.
\newblock {\em Automatica}, 145:110574, 2022.

\bibitem[LW95]{LagTh9517}
J.~C. Lagarias and Y.~Wang.
\newblock The finiteness conjecture for the generalized spectral radius of a
  set of matrices.
\newblock {\em Linear Algebra Appl.}, 214:17--42, 1995.

\bibitem[RS60]{RotA6037}
G.-C. Rota and W.~G. Strang.
\newblock A note on the joint spectral radius.
\newblock {\em Indag. Math.}, 22:379--381, 1960.

\bibitem[SIA05]{suzuki2005double}
Hideyuki Suzuki, Shunji Ito, and Kazuyuki Aihara.
\newblock Double rotations.
\newblock {\em Discrete and Continuous Dynamical Systems}, 13(2):515, 2005.

\bibitem[Sou79]{southcott1979trace}
JB~Southcott.
\newblock Trace polynomials of words in special linear groups.
\newblock {\em Journal of the Australian Mathematical Society}, 28(4):401--412,
  1979.

\bibitem[Zhu10]{zhuravlev2010one}
Vladimir~G Zhuravlev.
\newblock One-dimensional {F}ibonacci tilings and induced two-colour rotations
  of the circle.
\newblock {\em Izvestiya: Mathematics}, 74(2):281, 2010.

\end{thebibliography}
 
\end{document}